\documentclass[12pt]{article}

\usepackage{amssymb}
\usepackage{amsmath}
\usepackage{amsthm}
\usepackage{txfonts}
\usepackage{enumerate}
\usepackage{graphicx}
\usepackage{xcolor}
\usepackage[top=30truemm, bottom=30truemm, left=20truemm, right=20truemm]{geometry}
\usepackage{comment}

\newtheorem{thm}{Theorem}[section]
\newtheorem{lem}{Lemma}[section]

\title{{\LARGE Pattern block method for generating random numbers : 
Reformulation and generalization of the Ziggurat method using conditional random variables}}
\author{Kensuke Ishitani
\thanks{Department of Mathematical Sciences, Tokyo Metropolitan University, Hachioji, Tokyo 192-0397, Japan} 
\thanks{E-mail: k-ishitani@tmu.ac.jp
\qquad https://orcid.org/0000-0001-8400-0654}
\and 
Ryusei Gomi
\thanks{Department of Mathematical Sciences, Tokyo Metropolitan University, Hachioji, Tokyo 192-0397, Japan}
\and
Atsuya Kagawa
\thanks{Department of Mathematical Sciences, Tokyo Metropolitan University, Hachioji, Tokyo 192-0397, Japan}
}
\date{}
\begin{document}
\maketitle

\begin{abstract}
The Ziggurat method is an efficient rejection sampling technique for generating one-dimensional normally distributed random numbers. 
This study proposes the pattern block method, a generalization of the Ziggurat method.
The pattern block method enables the generation of random numbers 
from multimodal density functions and multidimensional distributions. 
The effectiveness of the pattern block method is demonstrated through several examples. 

\bigskip
\noindent {\bf Keywords:}
Random number generation, Ziggurat method, Pattern block method.
\footnote[0]{2020 Mathematics Subject Classification: 
Primary 65C10; Secondary 62-08.}
\end{abstract}

\section{Introduction}

Random number generation is a fundamental component of Monte Carlo methods. 
Recently, \cite{bib_Jalalvand, bib_Marsaglia_4, bib_Marsaglia_3, bib_McFarland, bib_Su, bib_Zhang} 
developed the Ziggurat method for generating $1$-dimensional normally distributed random numbers 
and demonstrated its effectiveness using numerical examples.
However, the random variable underlying the Ziggurat algorithm has not yet been formally defined. 
Therefore, this study reformulates and generalizes the Ziggurat method using conditional random variables. 
Conditional random variables play an important role in  
\cite{bib_Durret_1977, bib_Ishitani_1, bib_Ishitani_2, bib_Ishitani_3}.

The remainder of this paper is organized as follows.
Section~\ref{Sec_MainResults} presents the pattern block method proposed in this study.
Sections~\ref{Sec_Examples_1dim} and \ref{Sec_Examples_2dim} present applications of the pattern block method.
Finally, Section~\ref{Sec_conclusion} concludes.

\section{The pattern block method}\label{Sec_MainResults}

Let $(E,\mathcal{E}, \mu)$ be a $\sigma$-finite measure space. 
Let $f : E \rightarrow \mathbb{R}$ be a $\mathcal{E}/\mathcal{B}(\mathbb{R})$-measurable function that satisfies
\begin{align}\label{eq_f_densityfunction}
f(x)\geq 0\ (x\in E),\qquad K:=\int_{E}f(x)\, \mu(dx)\in (0, \infty),
\end{align}
where $\mathcal{B}(\mathbb{R})$ denotes the Borel $\sigma$-algebra of $\mathbb{R}$. 
Then $\displaystyle \frac{1}{K}f(x)$ is a probability density function. 
Let
\begin{align*}
\Gamma[f]:=
\left\{(x,y)\in E \times \mathbb{R} \ \big\vert \ 0\leq y\leq f(x)\right\}
\in \mathcal{E} \otimes \mathcal{B}(\mathbb{R}), 
\end{align*}
where $\mathcal{E} \otimes \mathcal{B}(\mathbb{R})$ denotes 
the product $\sigma$- algebra of $\mathcal{E}$ and $\mathcal{B}(\mathbb{R})$. 
Let $\nu$ be the measure on $(E\times \mathbb{R}, \mathcal{E} \otimes \mathcal{B}(\mathbb{R}))$  
\begin{align}\label{def_measure_nu}
\nu(C):= \iint_{E\times \mathbb{R}} 1_C(x,y)\, \mu(dx)\, dy
\ \, (C\in \mathcal{E} \otimes \mathcal{B}(\mathbb{R})).
\end{align}
Let $\widetilde{\pi}: E \times \mathbb{R} \to E$ be a projection defined by
\begin{align*}
\widetilde{\pi}(x, y) :=  x\quad ((x,y)\in E\times \mathbb{R}).
\end{align*}

Let $N\in \mathbb{N}$. 
We assume that $B_1, B_2, \cdots, B_N \in \mathcal{E} \otimes \mathcal{B}(\mathbb{R})$ 
and $\displaystyle B:= \bigcup_{i=1}^N B_i$ 
satisfy the following three conditions:
\begin{align}
&\nu(B_i) > 0 \quad (i=1,2,\cdots, N),
\label{Block_condition_1}\\
&\nu(B_i\cap B_j) = 0 \quad (i\neq j), 
\label{Block_condition_2}\\
&\nu\left(\Gamma[f] \setminus B\right) = 0, \quad \nu(B)<\infty .
\label{Block_condition_3}
\end{align}
We refer to $B_1, B_2, \cdots, B_N$ as the pattern blocks of $f(x)$.
Using \eqref{eq_f_densityfunction}, \eqref{Block_condition_2}, and \eqref{Block_condition_3}, 
we obtain
\begin{align}
K=\nu(\Gamma[f]) \leq \nu(B)= \sum_{i=1}^N \nu(B_i)<\infty.
\label{ineq_nuB_is_greater_than_1}
\end{align}

\begin{figure}[h]
\begin{center}
\includegraphics[scale=0.6]{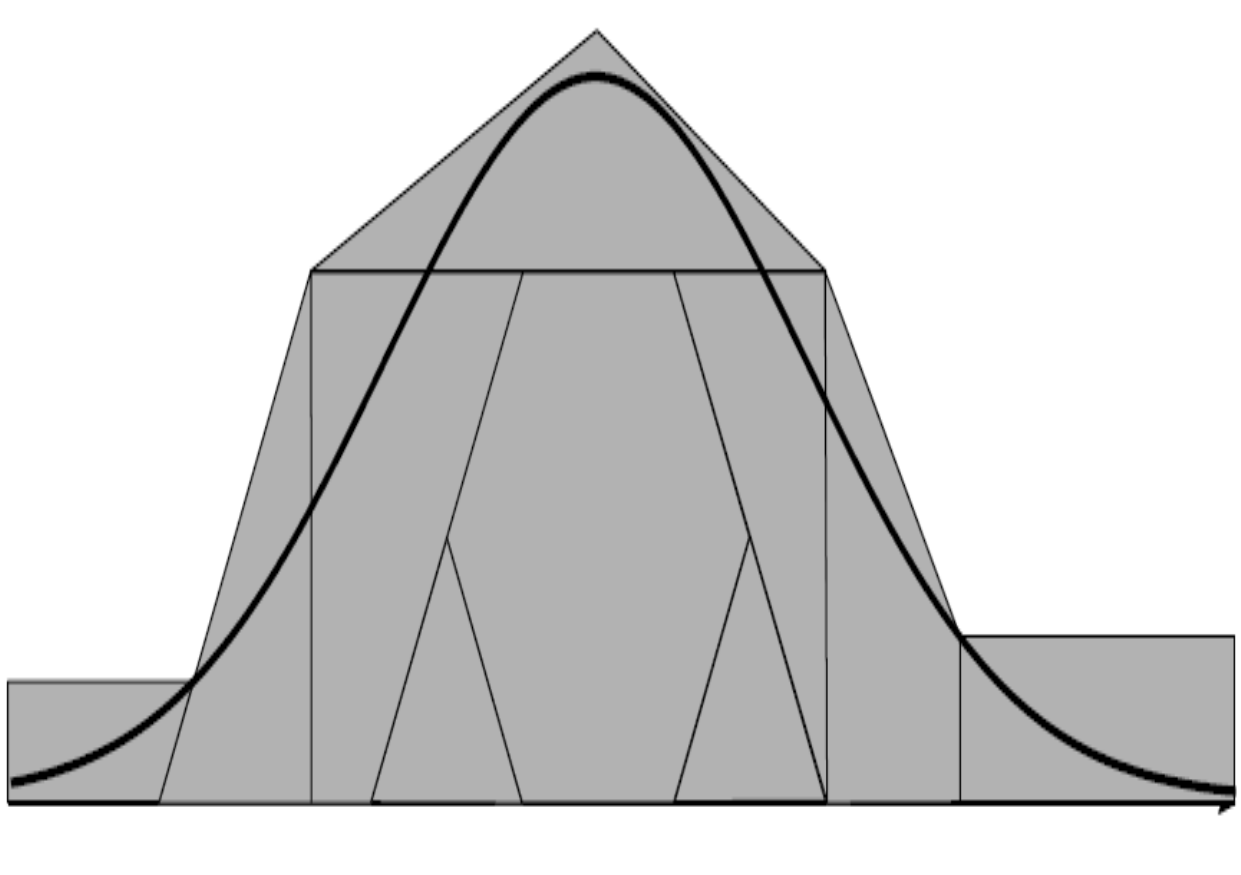}
\caption{$f(x)$ and its pattern blocks}
\label{fig:patternblock}
\end{center}
\end{figure}

Let $\Psi$ be the block selection function defined as
\begin{align*}
\Psi (u):= \min \left\{i \in \{1, 2, \cdots, N\}\ \middle|\ u < \sum_{j=1}^{i}\frac{\nu(B_j)}{\nu(B)}\right\} 
\qquad (0\leq u<1).
\end{align*}
For $i=1, 2, \cdots, N$, let
\begin{align*}
B_i[f]:= B_i \cap \Gamma[f]=\left\{(x, y)\in B_i \ |\ 0\leq y\leq f(x)\right\}.
\end{align*}
Let $\Lambda$ be the acceptance set defined as
\begin{align}
&\Lambda :=
\big\{(u, v_1, v_2, \cdots, v_N, w_1, w_2, \cdots, w_N)  
\in [0,1) \times E^N \times \mathbb{R}^N\ \big|\ (v_{\Psi (u)}, w_{\Psi (u)})\in B_{\Psi (u)}[f] \big\}.
\label{def_set_Lambda} 
\end{align}
Let 
$\pi_{\Psi}: [0,1) \times E^N \times \mathbb{R}^N \to E$ denote the projection:
\begin{align}
&\pi_{\Psi}(u, v_1, v_2, \cdots, v_N, w_1, w_2, \cdots, w_N) 
 := \widetilde{\pi}(v_{\Psi (u)}, w_{\Psi (u)})=v_{\Psi (u)}
\label{def_pi_I_random_projection}\\
&((u, v_1, v_2, \cdots, v_N, w_1, w_2, \cdots, w_N)\in [0,1) \times E^N \times \mathbb{R}^N).
\nonumber
\end{align}

Let $(\Omega, \mathcal{F}, P)$ be a probability space. 
Let $U, W_1, W_2, \cdots, W_N$ be $1$-dimensional random variables on $(\Omega, \mathcal{F}, P)$,
and let $V_1, V_2, \cdots, V_N$ be $(E, \mathcal{E})$-valued random variables on $(\Omega, \mathcal{F}, P)$. 
For $i=1, 2, \cdots, N$, we assume that 
$(V_i, W_i)$ and $U$ are independent and satisfy
\begin{align}
&P((V_i, W_i)\in C)=\dfrac{\nu(C\cap B_i)}{\nu(B_i)}
\quad (C\in \mathcal{E} \otimes \mathcal{B}(\mathbb{R})), 
\label{eq_Dist_Vi_Wi}\\
&P\left(U\in D\right)=\int_D 1_{[0,1)}(u)\, du
\qquad (D\in \mathcal{B}(\mathbb{R})).
\label{eq_Dist_U}
\end{align}
Let
$Z: (\Omega, \mathcal{F}, P) \rightarrow 
(\mathbb{R} \times E^N \times \mathbb{R}^N,
\ \mathcal{B}(\mathbb{R}) \otimes \mathcal{E}^{\otimes N} \otimes \mathcal{B}(\mathbb{R}^N))$
be a random variable given by 
\begin{align}\label{Def_RV_Z_on_OFP}
Z:=(U, V_1, V_2, \cdots, V_N, W_1, W_2, \cdots, W_N).
\end{align}
Then we obtain the following lemma.
\begin{lem}\label{lem:main}
For $A\in \mathcal{E}$, we have
\begin{align}\label{lem_main_eq}
P(V_{\Psi (U)}\in A, Z \in  \Lambda)
=\frac{1}{\nu(B)}\int_{A} f(x)\ \mu(dx).
\end{align}
\end{lem}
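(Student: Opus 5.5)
Our plan is to decompose the event according to the value of the block index $\Psi(U)$, and then use the independence of $U$ and $(V_i,W_i)$ for each fixed $i$. Set $p_i:=\nu(B_i)/\nu(B)$ and $s_i:=\sum_{j=1}^{i}p_j$, with $s_0:=0$. By \eqref{ineq_nuB_is_greater_than_1} we have $s_N=1$. Since every $p_i>0$ by \eqref{Block_condition_1}, the definition of $\Psi$ gives $\{\Psi(u)=i\}=[s_{i-1},s_i)$ for $u\in[0,1)$. By \eqref{eq_Dist_U} we have $U\in[0,1)$ almost surely, and therefore
\begin{align*}
P(V_{\Psi (U)}\in A,\ Z \in \Lambda)
=\sum_{i=1}^N P\big(U\in[s_{i-1},s_i),\ V_i\in A,\ (V_i,W_i)\in B_i[f]\big),
\end{align*}
because on $\{\Psi(U)=i\}$ the membership $Z\in\Lambda$ means exactly $(V_i,W_i)\in B_i[f]$, and $V_{\Psi(U)}=V_i$.

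For fixed $i$, the event $\{V_i\in A,\ (V_i,W_i)\in B_i[f]\}$ equals $\{(V_i,W_i)\in (A\times\mathbb{R})\cap B_i\cap\Gamma[f]\}$, and this set lies in $\mathcal{E}\otimes\mathcal{B}(\mathbb{R})$. Independence of $U$ and $(V_i,W_i)$, together with \eqref{eq_Dist_U} and \eqref{eq_Dist_Vi_Wi}, shows that the $i$-th term equals
\begin{align*}
p_i\cdot \frac{\nu\big((A\times\mathbb{R})\cap\Gamma[f]\cap B_i\big)}{\nu(B_i)}
=\frac{\nu\big((A\times\mathbb{R})\cap\Gamma[f]\cap B_i\big)}{\nu(B)}.
\end{align*}
The factor $\nu(B_i)$ cancels here, and this cancellation is the essential mechanism of the method.

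It remains to sum over $i$, and this step is the main (though mild) obstacle, since the blocks are not disjoint. We handle it with inclusion–exclusion, or more precisely by comparing $\sum_i 1_{C\cap B_i}$ with $1_{C\cap B}$ for $C:=(A\times\mathbb{R})\cap\Gamma[f]$. These two functions differ only on $\bigcup_{i\neq j}(B_i\cap B_j)$, which is $\nu$-null by \eqref{Block_condition_2}. Hence $\sum_i\nu(C\cap B_i)=\nu(C\cap B)$. Next, \eqref{Block_condition_3} gives $\nu(C\setminus B)\le\nu(\Gamma[f]\setminus B)=0$, so $\nu(C\cap B)=\nu(C)$. Finally we compute $\nu(C)$ by Tonelli's theorem, which applies because $\mu$ is $\sigma$-finite and $\Gamma[f]$ is product-measurable:
\begin{align*}
\nu(C)=\int_E 1_A(x)\int_{\mathbb{R}}1_{[0,f(x)]}(y)\,dy\,\mu(dx)=\int_A f(x)\,\mu(dx).
\end{align*}
Dividing by $\nu(B)$ yields \eqref{lem_main_eq}.
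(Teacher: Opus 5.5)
Your proof is correct and follows the paper's route. You split according to $\Psi(U)=i$, use the independence of $U$ and $(V_i,W_i)$ so that the factor $\nu(B_i)/\nu(B)$ cancels the normalization in \eqref{eq_Dist_Vi_Wi}, merge the blocks using \eqref{Block_condition_2}, discard $\Gamma[f]\setminus B$ via \eqref{Block_condition_3}, and finish with Fubini--Tonelli; along the way you make explicit steps the paper leaves implicit, namely $\{\Psi(u)=i\}=[s_{i-1},s_i)$ and the indicator comparison giving $\sum_i\nu(C\cap B_i)=\nu(C\cap B)$.
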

\begin{proof}
Since $(V_i, W_i)$ and $U$ are independent for every $i=1, 2, \cdots, N$, we obtain
\begin{align}
&P(V_{\Psi (U)}\in A, Z \in  \Lambda) 
= \sum_{i=1}^NP(\Psi (U)=i)\cdot P((V_i,W_i)\in B_i[f]\cap (A\times \mathbb{R})).
\label{lem_proof_step1} 
\end{align}
From \eqref{Block_condition_2}, \eqref{eq_Dist_Vi_Wi}, \eqref{eq_Dist_U}, and \eqref{lem_proof_step1}, we obtain
\begin{align}
P(V_{\Psi (U)}\in A, Z \in  \Lambda) 
&= \sum_{i=1}^N\frac{\nu(B_i)}{\nu(B)}\cdot E[1_{B_i[f] \cap (A\times \mathbb{R})}(V_i, W_i)] \nonumber \\
&= \sum_{i=1}^N\frac{\nu(B_i)}{\nu(B)}\cdot \frac{1}{\nu(B_i)}\iint_{B_i \cap (A\times \mathbb{R})}1_{[0, f(x)]}(y)\, \mu(dx)\, dy
\nonumber \\
&= \frac{1}{\nu(B)}\iint_{B \cap (A\times \mathbb{R})}1_{[0, f(x)]}(y)\, \mu(dx)\, dy.
\label{lem_proof_step2} 
\end{align}
Using \eqref{Block_condition_3}, \eqref{lem_proof_step2}, and Fubini's theorem, we obtain \eqref{lem_main_eq}. 
\end{proof}

By Lemma~\ref{lem:main}, 
\eqref{eq_f_densityfunction}, and
\eqref{ineq_nuB_is_greater_than_1}, it follows that
\begin{align}\label{remarkeq_P_is_positive}
P(Z \in \Lambda)=\dfrac{1}{\nu(B)}\displaystyle\int_{E}f(x)\ \mu(dx)=\dfrac{K}{\nu(B)}>0.
\end{align}
Let $P_{Z^{-1}(\Lambda)}$ be the probability measure on $(Z^{-1}(\Lambda), Z^{-1}(\Lambda)\cap\mathcal{F})$ defined by
\begin{align*}
&P_{Z^{-1}(\Lambda)}(A):= \frac{P(A)}{P(Z \in \Lambda)},\qquad 
A\in Z^{-1}(\Lambda)\cap \mathcal{F}:= \{Z^{-1}(\Lambda)\cap F\ |\ F\in \mathcal{F}\}.
\end{align*}
Let $Z|_\Lambda$ denote the restriction of $Z$ to 
$(Z^{-1}(\Lambda), Z^{-1}(\Lambda)\cap \mathcal{F}, P_{Z^{-1}(\Lambda)})$
\begin{align*}
Z : (Z^{-1}(\Lambda), Z^{-1}(\Lambda)\cap \mathcal{F}, P_{Z^{-1}(\Lambda)}) \to (\Lambda, \mathcal{B}(\Lambda)).
\end{align*}
Thus, $Z|_\Lambda$ is a random variable. Let 
$X : (Z^{-1}(\Lambda), Z^{-1}(\Lambda)\cap \mathcal{F}, P_{Z^{-1}(\Lambda)}) \to (E, \mathcal{E})$
be a random variable defined as follows:
\begin{align}
X:= \pi_{\Psi}(Z|_\Lambda) 
=\pi_{\Psi}((U, V_1, V_2, \cdots, V_N, W_1, W_2, \cdots, W_N)|_{\Lambda}).
\label{def:X_patternblock} 
\end{align}
$\displaystyle Z|_{\Lambda}$ and $X$ are conditional random variables defined on 
$(Z^{-1}(\Lambda), Z^{-1}(\Lambda)\cap \mathcal{F}, P_{Z^{-1}(\Lambda)})$. 
Using Lemma~\ref{lem:main} and \eqref{remarkeq_P_is_positive}, we obtain the following theorem:

\begin{thm}\label{thm:main}
For $A\in \mathcal{E}$, we have 
\begin{align*}
P_{Z^{-1}(\Lambda)}(X \in A) = \frac{1}{K}\int_A f(x)\ \mu(dx).
\end{align*}
\end{thm}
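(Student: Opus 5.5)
The plan is to reduce Theorem~\ref{thm:main} directly to Lemma~\ref{lem:main} and \eqref{remarkeq_P_is_positive} by unwinding the definition of the conditional measure $P_{Z^{-1}(\Lambda)}$. The point is that the event $\{X\in A\}$, viewed as a subset of $Z^{-1}(\Lambda)$, coincides with the event $\{V_{\Psi(U)}\in A,\ Z\in\Lambda\}$ in $\Omega$.

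First, fix $A\in\mathcal{E}$. By \eqref{def:X_patternblock} and \eqref{def_pi_I_random_projection}, for $\omega\in Z^{-1}(\Lambda)$ we have $X(\omega)=\pi_{\Psi}(Z(\omega))=V_{\Psi(U(\omega))}(\omega)$. Hence
\begin{align*}
\{X\in A\}=\{\omega\in Z^{-1}(\Lambda)\ |\ V_{\Psi(U(\omega))}(\omega)\in A\}=Z^{-1}(\Lambda)\cap\{V_{\Psi(U)}\in A\}.
\end{align*}
I will check that this set belongs to $Z^{-1}(\Lambda)\cap\mathcal{F}$. It suffices that $\{V_{\Psi(U)}\in A\}\in\mathcal{F}$. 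This holds because
\begin{align*}
\{V_{\Psi(U)}\in A\}=\bigcup_{i=1}^N\big(\{\Psi(U)=i\}\cap\{V_i\in A\}\big),
\end{align*}
and $\{\Psi(U)=i\}=\{U\in[s_{i-1},s_i)\}$ with $s_i=\sum_{j\le i}\nu(B_j)/\nu(B)$, which is a Borel condition on $U$. This measurability check is the only step needing any care. It was already implicitly used in Lemma~\ref{lem:main}, so I expect no real obstacle there. Measurability of $\Lambda$ itself follows from the same decomposition by $\Psi(u)=i$.

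Then, by the definition of $P_{Z^{-1}(\Lambda)}$,
\begin{align*}
P_{Z^{-1}(\Lambda)}(X\in A)=\frac{P(V_{\Psi(U)}\in A,\ Z\in\Lambda)}{P(Z\in\Lambda)}.
\end{align*}
Lemma~\ref{lem:main} gives the numerator as $\frac{1}{\nu(B)}\int_A f\,d\mu$. Equation \eqref{remarkeq_P_is_positive} gives the denominator as $K/\nu(B)>0$, so the division is legitimate. The factors $\nu(B)$ cancel, which yields $\frac1K\int_A f(x)\,\mu(dx)$ as claimed.
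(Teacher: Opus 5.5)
Your proposal is correct and is essentially the paper's argument: the theorem is stated there as an immediate consequence of Lemma~\ref{lem:main} and \eqref{remarkeq_P_is_positive}, i.e.\ of writing $\{X\in A\}=Z^{-1}(\Lambda)\cap\{V_{\Psi(U)}\in A\}$ and dividing by $P(Z\in\Lambda)=K/\nu(B)$. Your measurability check via $\{\Psi(U)=i\}=\{U\in[s_{i-1},s_i)\}$ is a correct detail that the paper leaves implicit.
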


Thus, using the conditional random variable $\pi_{\Psi}(Z|_\Lambda)$, 
we can generate random numbers from the density function $\displaystyle \frac{1}{K}f(x)$. 
We call this technique the pattern block method. 
The algorithm is as follows:
\begin{itemize}
\item[Step 1.]
Generate $U$ and $(V_{\Psi(U)}, W_{\Psi(U)})$. 
\item[Step 2.]
If $W_{\Psi(U)} \leq f(V_{\Psi(U)})$, return $V_{\Psi(U)}$.
\item[Step 3.]
If $W_{\Psi(U)} > f(V_{\Psi(U)})$, go to Step~1.
\end{itemize}
Let $R_{{\rm adoption}}$ be the adoption rate defined as
\begin{align}
&R_{{\rm adoption}}:= \frac{K}{\nu(B_1)+\nu(B_2)+\cdots +\nu(B_N)}. 
\label{Def_adoption_rate} 
\end{align}

\section{Applications to generate $1$-dimensional random numbers}\label{Sec_Examples_1dim}

In this section, we assume the following. 
$(E, \mathcal{E})$ is $(\mathbb{R}, \mathcal{B}(\mathbb{R}))$,  
and $\mu$ is the $1$-dimensional Lebesgue measure on $(\mathbb{R}, \mathcal{B}(\mathbb{R}))$. 
Then, $\nu$, defined by \eqref{def_measure_nu}, is 
the $2$-dimensional Lebesgue measure on $(\mathbb{R}^2, \mathcal{B}(\mathbb{R}^2))$. 
For every $C\in \mathcal{B}(\mathbb{R}^2)$, $\vert C\vert$ denotes 
the $2$-dimensional Lebesgue measure $\nu(C)$.

\subsection{The Ziggurat method}\label{Subsec_Ziggurat}

\begin{figure}[h]
\begin{center}
\includegraphics[scale=0.7]{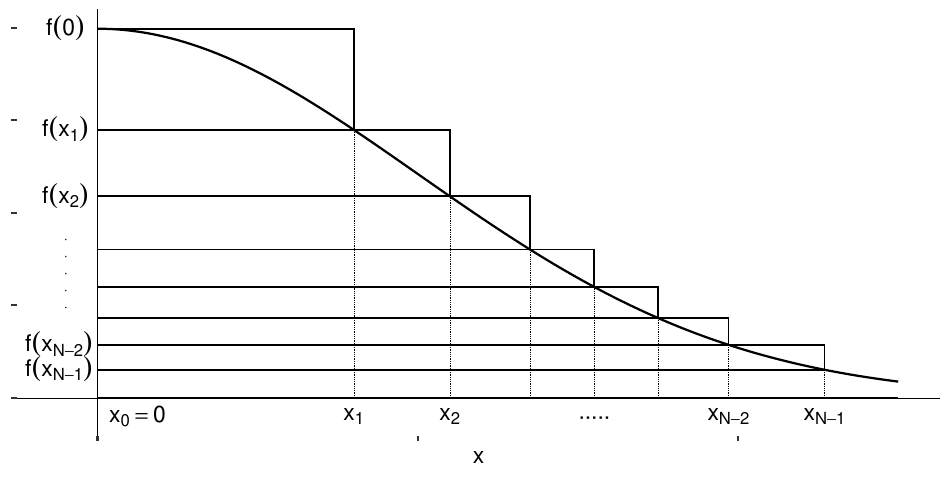}
\caption{Ziggurat method}
\label{fig:Ziggurat}
\end{center}
\end{figure}

In this subsection, we explain that 
the pattern block method is a generalization of the Ziggurat method. 
We assume that the probability density function $f(x)$ strictly decreases for $x > 0$ and satisfies 
$\displaystyle \int_0^{\infty}f(x)\ dx=1$. 
Let $N\in \mathbb{N}$ with $N \geq 2$, 
and let $\displaystyle 0=x_0<x_1<\cdots<x_{N-1}$. Let 
\begin{align*}
&B_i = [0, x_i]\times [f(x_i), f(x_{i-1})]\qquad (i=1, 2, \cdots, N-1), \\
&B_N = ([0, x_{N-1}]\times[0, f(x_{N-1})])
\cup \{(x, y)\in \mathbb{R}^2\ |\ x \geq x_{N-1}, 0\leq y \leq f(x)\}
\end{align*}
be the pattern blocks of $f(x)$. 
The pattern block method then corresponds to the Ziggurat method.

\subsection{Density function with singularities}\label{Subsec_1dim_example1}

Let 
$\displaystyle \varphi(x)=\dfrac{1}{\pi \sqrt{x(1-x)}}$ and 
$\displaystyle h(x)=1+\sin(8\pi x)$. 
Let $f(x)$ be the density function defined as
\begin{align*}
f(x) = 
\left\{ 
\begin{array}{cl}
h(x) \varphi(x)  & (0<x<1), \\
0 & (\text{otherwise}).
\end{array}\right. 
\end{align*}
Let $N=8$. The pattern blocks of $f(x)$ are defined as
\begin{align*}
&B_i = \left\{(x,y)\in\mathbb{R}^2 \;\middle|\;
a_{i-1} \leq x \le a_i,\; 0 \leq y \le b_i \varphi(x)\right\} 
\end{align*}
for $i=1, 2, \cdots , 8$, where
\begin{align*}
&a_i := \dfrac{i}{8}, \quad 
b_i :=
\left\{\begin{array}{cl}
2 & (\text{when $i$ is odd})\\
1 & (\text{when $i$ is even})
\end{array}\right. 
(1\leq i \leq 8).
\end{align*}
Fig.~\ref{Beta_blocks} shows $f(x)$ and $B_1,B_2,\cdots,B_8$.

\begin{figure}[h]
  \centering
  \includegraphics[scale=0.7]{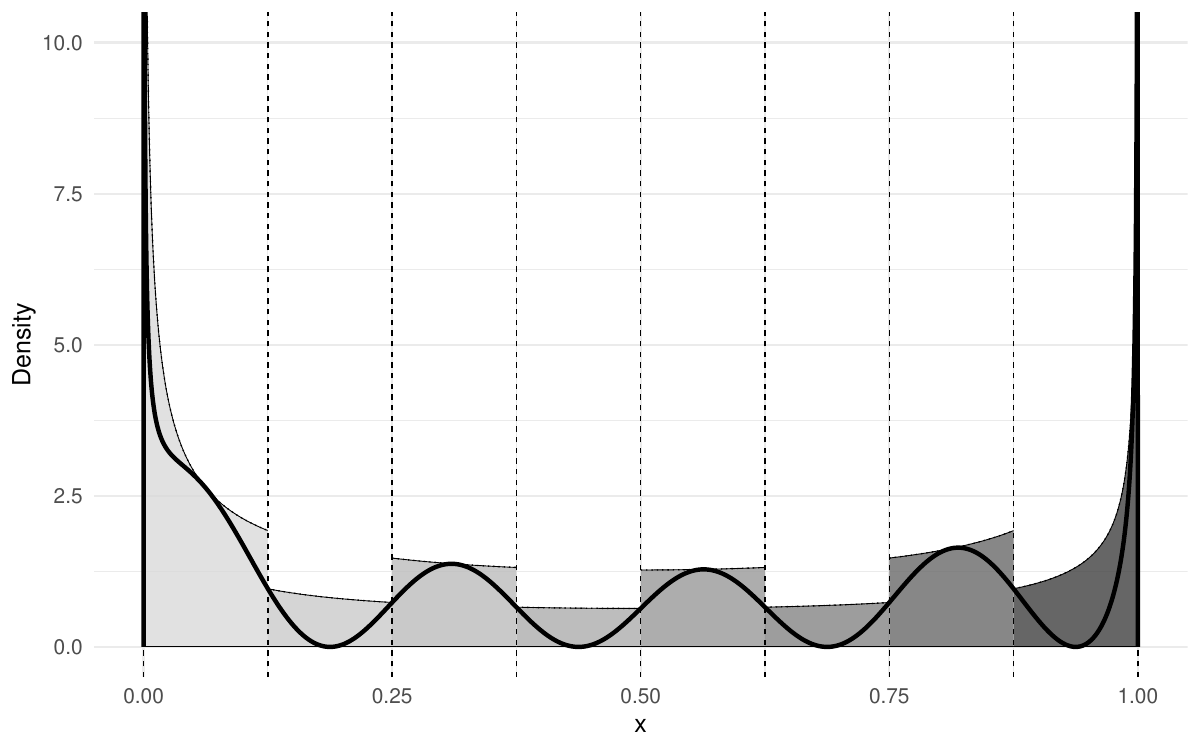}
  \caption{$f(x)$ and its pattern blocks $B_1, B_2, \cdots, B_8$}
  \label{Beta_blocks}
\end{figure}

Because $\varphi(x)$ is the density function of the Beta distribution $\displaystyle B\left(\tfrac{1}{2},\tfrac{1}{2}\right)$,
its cumulative distribution function $\Phi(x)$ is given by 
\begin{align*}
\Phi(x)=\frac{2}{\pi}\arcsin\left(\sqrt{x}\right)\qquad (0<x<1).
\end{align*}
Then $F_{V_i}(x):=P(V_i\leq x)$ is given by 
\begin{align*}
&F_{V_i}(x)
=\left\{
\begin{array}{cl}
0 & (x \le a_{i-1}),\\
\dfrac{b_i}{|B_i|}\bigl(\Phi(x)-\Phi(a_{i-1})\bigr) 
& (a_{i-1} \le x \le a_i),\\
1 & (x \ge a_i) .
\end{array}
\right.
\end{align*}
We assume that the independent random variables $U$, $\xi$, and $\eta$,  
defined on the probability space $(\Omega, \mathcal{F}, P)$, 
are uniformly distributed on $[0,1)$. 
We define the random variables 
$(V_i, W_i)$ $(i=1, 2, \cdots, 8)$ as
\begin{align*}
V_i :=& F_{V_i}^{-1}(\xi)= \Phi^{-1}(\Phi(a_{i-1}) + \xi \, \bigl(\Phi(a_i)-\Phi(a_{i-1})\bigr)) 
=\sin^2\!\left(\frac{\pi}{2}\left\{ \Phi(a_{i-1}) + \xi \, \bigl(\Phi(a_i)-\Phi(a_{i-1})\bigr) \right\} \right), \\
W_i :=& b_i \varphi(V_i)\,\eta = b_i \varphi \big(F_{V_i}^{-1}(\xi)\big)\,\eta .
\end{align*}
Subsequently, $(V_i, W_i)$ $(i=1, 2, \cdots, 8)$ satisfy \eqref{eq_Dist_Vi_Wi}. 
The adoption rate is calculated as follows:
\begin{align*}
&R_{{\rm adoption}}
=\frac{1}{\sum_{i=1}^8 b_i\bigl(\Phi(a_i)-\Phi(a_{i-1})\bigr)}
=\dfrac{2}{3}.
\end{align*}
Thus, by applying the pattern block method, 
we obtained $10000$ random numbers from the probability density function $f(x)$. 
To obtain $10000$ random numbers, $15048$ random number triplets $(U, \xi, \eta)$ were required. 
Fig.~\ref{Beta_result} shows the density scale histogram for $10000$ random numbers.

\begin{figure}[h]
  \centering
  \includegraphics[scale=0.7]{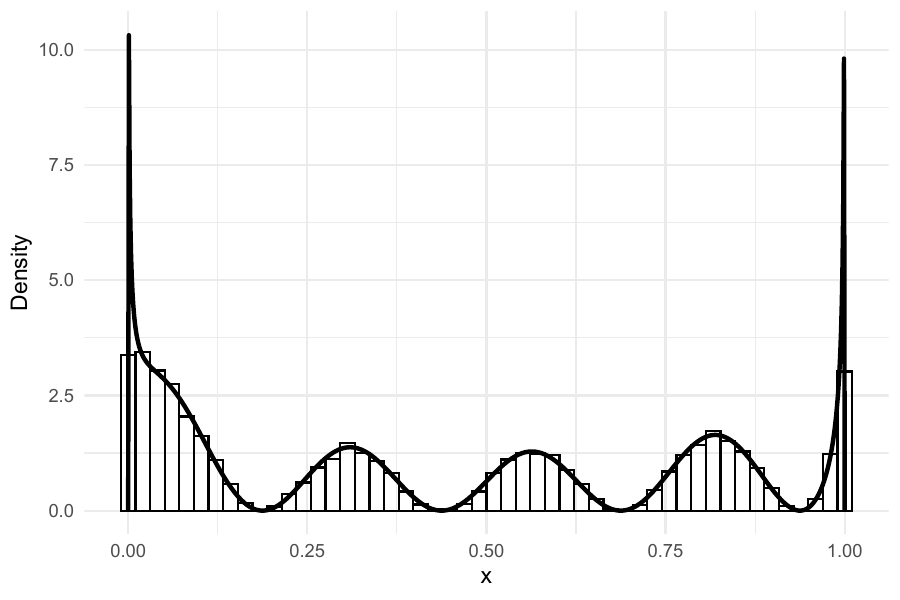}
  \caption{Histogram and density function}
  \label{Beta_result}
\end{figure}

\section{Application to generate $2$-dimensional random numbers}\label{Sec_Examples_2dim}

In this section, we assume that $(E, \mathcal{E})$  is defined as follows:
$E=[\underline{a}, \overline{a}]\times [\underline{a}, \overline{a}]$ and 
$\mathcal{E}=\mathcal{B}(E)$, where $\underline{a}=-4$ and $\overline{a}=4$. 
Let $\mu$ be the $2$-dimensional Lebesgue measure on $(E, \mathcal{E})$. 
Then $\nu$ is the $3$-dimensional Lebesgue measure on 
$(E\times \mathbb{R}, \mathcal{B}(E\times \mathbb{R}))$.
For every $C\in \mathcal{B}(E\times \mathbb{R})$, $\vert C\vert$ denotes 
the $3$-dimensional Lebesgue measure $\nu(C)$.
Let $f(x_1, x_2)$ be the density function on $(E, \mathcal{E})$:
\begin{align*}
&f(x_1, x_2)=
c\Big\{ e^{- x_1^2 - x_2^2}
+\dfrac{1}{2}e^{-(x_1-2)^2-(x_2-2)^2}
\Big\},
\end{align*}
where $c=\dfrac{2119}{9970}$. 
Let
$b_0 = \cfrac{1}{40}$, 
$b_1 = \dfrac{1}{15}$, 
$b_2 = c\big(\dfrac{1}{e^8}+\dfrac{1}{2}\big)$, 
$b_3 = c\big(1+\dfrac{1}{2e^8}\big)$. 
We define $\left[\underline{\kappa}_i, \overline{\kappa}_i\right]$ $(1\leq i \leq 5)$ as
\begin{align*}
&\big[ \underline{\kappa}_1, \overline{\kappa}_1 \big] := \big[0, b_0 \big], \quad
\big[ \underline{\kappa}_2, \overline{\kappa}_2 \big] := \big[ b_0, b_1 \big], \quad
\big[ \underline{\kappa}_3, \overline{\kappa}_3 \big] := \big[ b_1, b_2 \big], \quad
\big[ \underline{\kappa}_4, \overline{\kappa}_4 \big] := \big[ b_1, b_2 \big], \quad
\big[ \underline{\kappa}_5, \overline{\kappa}_5 \big] := \big[ b_2, b_3 \big].
\end{align*}
Moreover, we define $(c_{i,1}, c_{i,2})$ and $d_i$ $(i=3, 4, 5)$ as
\begin{align*}
&(c_{3,1}, c_{3,2})=(c_{5,1}, c_{5,2})=(0,0), \quad (c_{4,1}, c_{4,2})=(2,2), \quad
d_3 =\dfrac{5}{4}, \quad d_4=d_5=1.
\end{align*}
Let $N=5$. The pattern blocks of $f(x_1, x_2)$ are defined as
$B_i = E_i \times \left[\underline{\kappa}_i, \overline{\kappa}_i\right]$ $(1\leq i \leq 5)$, 
where 
\begin{align*}
&E_1=E, \quad E_2 = \left\{(x_1, x_2)\in E\ \middle|\ f(x_1, x_2)\geq b_0 \right\}, \\
&E_i = \left\{(x_1, x_2)\in E\ \middle|\ (x_1-c_{i, 1})^2 + (x_2-c_{i, 2})^2 \leq d_i^2\right\}
\qquad (i=3,4,5).
\end{align*}

\begin{figure}[htbp]
\centering
\begin{minipage}{0.3\linewidth}
    \centering
    \includegraphics[scale=0.5]{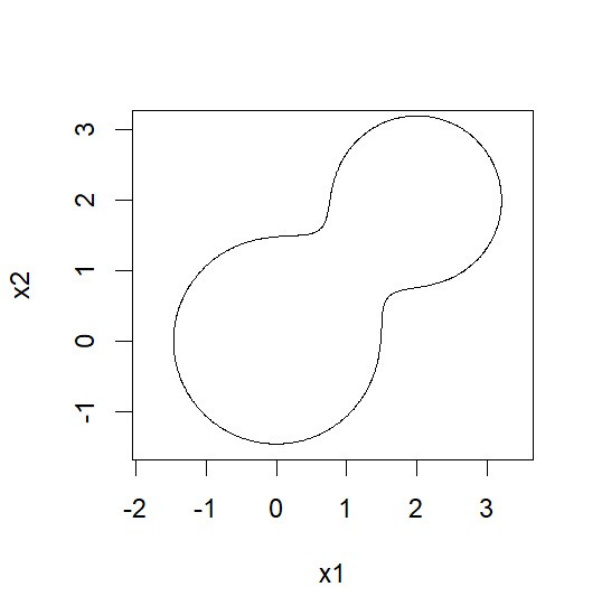}
    \caption{$f(x_1, x_2)=b_0$}
    \label{fig:contour_a0}
\end{minipage}
\hfill
\begin{minipage}{0.3\linewidth}
    \centering
    \includegraphics[scale=0.5]{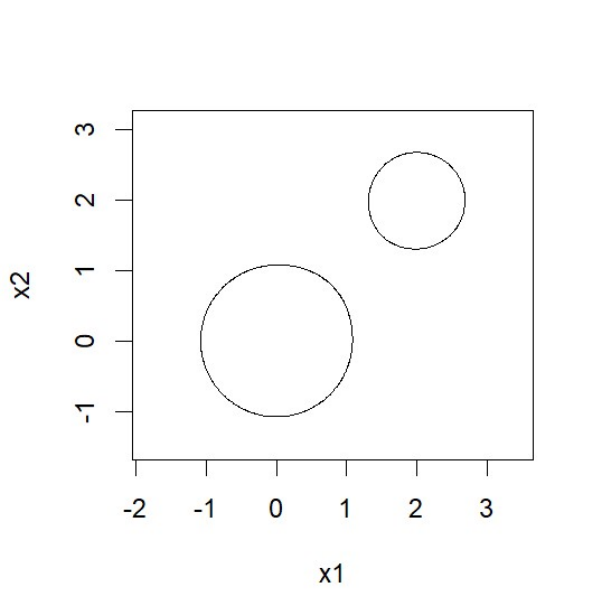}
    \caption{$f(x_1, x_2)=b_1$}
    \label{fig:contour_a1}
\end{minipage}
\hfill
\begin{minipage}{0.3\linewidth}
    \centering
    \includegraphics[scale=0.5]{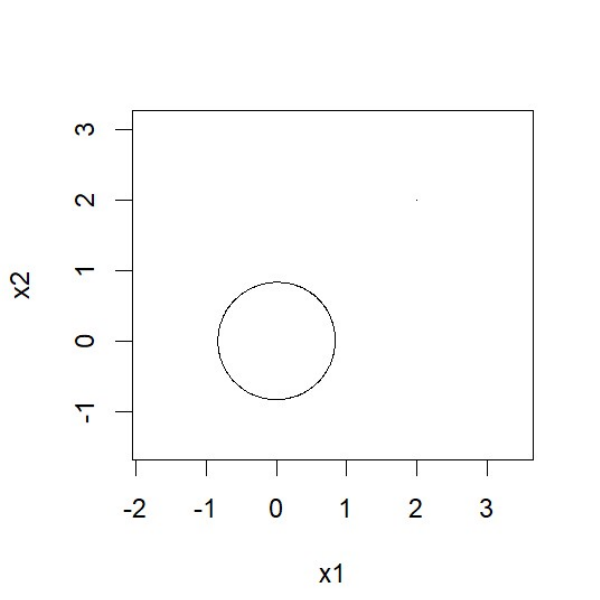}
    \caption{$f(x_1, x_2)=b_2$}
    \label{fig:contour_a2}
\end{minipage}
\end{figure}

We assume that the independent random variables $U$, $\xi_1$, $\xi_2$, and $\eta$,  
defined on the probability space $(\Omega, \mathcal{F}, P)$, are uniformly distributed on $[0,1)$. 
Let 
\begin{align}
&V_1^{(j)} = \underline{a} + (\overline{a} - \underline{a})\cdot \xi_j\quad (j=1,2), \quad 
W_1 = b_0\cdot \eta .
\label{example_2dim_mixednormal_eq1}
\end{align}
Subsequently, $(V_1^{(1)}, V_1^{(2)}, W_1)$ satisfies \eqref{eq_Dist_Vi_Wi}. Let
\begin{align*}
\widetilde{V}_2^{(1)} := \dfrac{11}{2}\xi_1-2,\quad 
\widetilde{V}_2^{(2)} := \dfrac{11}{2}\xi_2-2.
\end{align*}
We define $(V_2^{(1)}, V_2^{(2)}, W_2)$ as 
\begin{align}
&(V_2^{(1)}, V_2^{(2)}) := 
\left(\widetilde{V}_2^{(1)},\widetilde{V}_2^{(2)}\right)\big\vert_{E_2},  \quad
W_2 := (b_1 - b_0)\eta + b_0. \label{example_2dim_mixednormal_eq2}
\end{align}
Subsequently, $(V_2^{(1)}, V_2^{(2)}, W_2)$ satisfies \eqref{eq_Dist_Vi_Wi}. Let 
\begin{align*}
&\Phi_i(r, \theta, y):=(c_{i, 1} + r\cos \theta, c_{i, 2} + r\sin \theta, y), \quad
\widetilde{B}_i := \left[0, d_i\right]\times [0, 2\pi)\times \left[\underline{\kappa}_i, \overline{\kappa}_i\right]
\qquad (i=3, 4, 5).
\end{align*}
Thus, $B_i = \Phi_i(\widetilde{B}_i)$\ $(i=3, 4, 5)$. Let
\begin{align*}
F_{R_i}(r):=
\left\{
\begin{array}{cl}
0 & (r\leq 0) \\
\dfrac{\pi r^2\big(\overline{\kappa}_i-\underline{\kappa}_i\big)}{\vert B_i\vert} & \left(0\leq r\leq d_i\right) \\
1 & \left(r \geq d_i\right).
\end{array}
\right. 
\end{align*}
We define $(V_i^{(1)}, V_i^{(2)}, W_i)$ $(i=3, 4, 5)$ as
\begin{align}
&V_i^{(1)} = c_{i,1}+F_{R_i}^{-1}(\xi_1)\cos 2\pi\xi_2, \quad
V_i^{(2)} = c_{i,2}+F_{R_i}^{-1}(\xi_1)\sin2\pi\xi_2, \quad
W_i = (\overline{\kappa}_i-\underline{\kappa}_i)\eta + \underline{\kappa}_i 
\quad (i=3, 4, 5). \label{example_2dim_mixednormal_eq3}
\end{align}
Subsequently, $(V_i^{(1)}, V_i^{(2)}, W_i)$ $(i=3, 4, 5)$ satisfy \eqref{eq_Dist_Vi_Wi}. 
Thus, by applying the pattern block method, 
we obtained $500000$ random vectors from the probability density function $f(x_1, x_2)$. 
To obtain $500000$ random vectors, $1375334$ random number quartets $(U, V_{\Psi(U)}^{(1)}, V_{\Psi(U)}^{(2)}, W_{\Psi(U)})$ were required. 
The adoption rate is calculated as follows:
\begin{align*}
R_{\rm{adoption}} = \dfrac{1}{\sum_{i=1}^5 \vert B_i\vert} 
=0.3644\cdots .
\end{align*}
Figs.~\ref{fig:2-dim_result} and \ref{fig:density function} show
the density scale histogram for $500000$ random vectors and 
the density function $y=f(x_1, x_2)$.

\begin{figure}[htbp]
\centering
\begin{minipage}{0.49\linewidth}
    \centering
    \includegraphics[scale = 0.5]{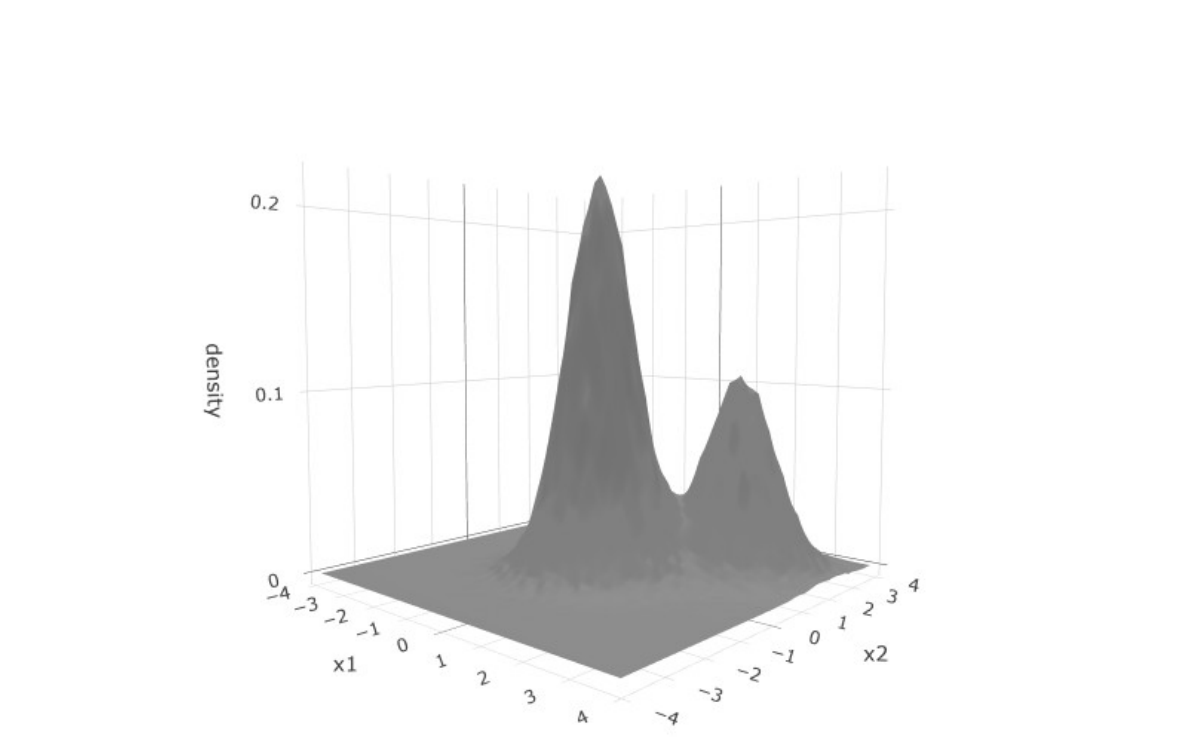}
    \caption{Histogram}
    \label{fig:2-dim_result}
\end{minipage}
\hfill
\begin{minipage}{0.49\linewidth}
    \centering
    \includegraphics[scale = 0.45]{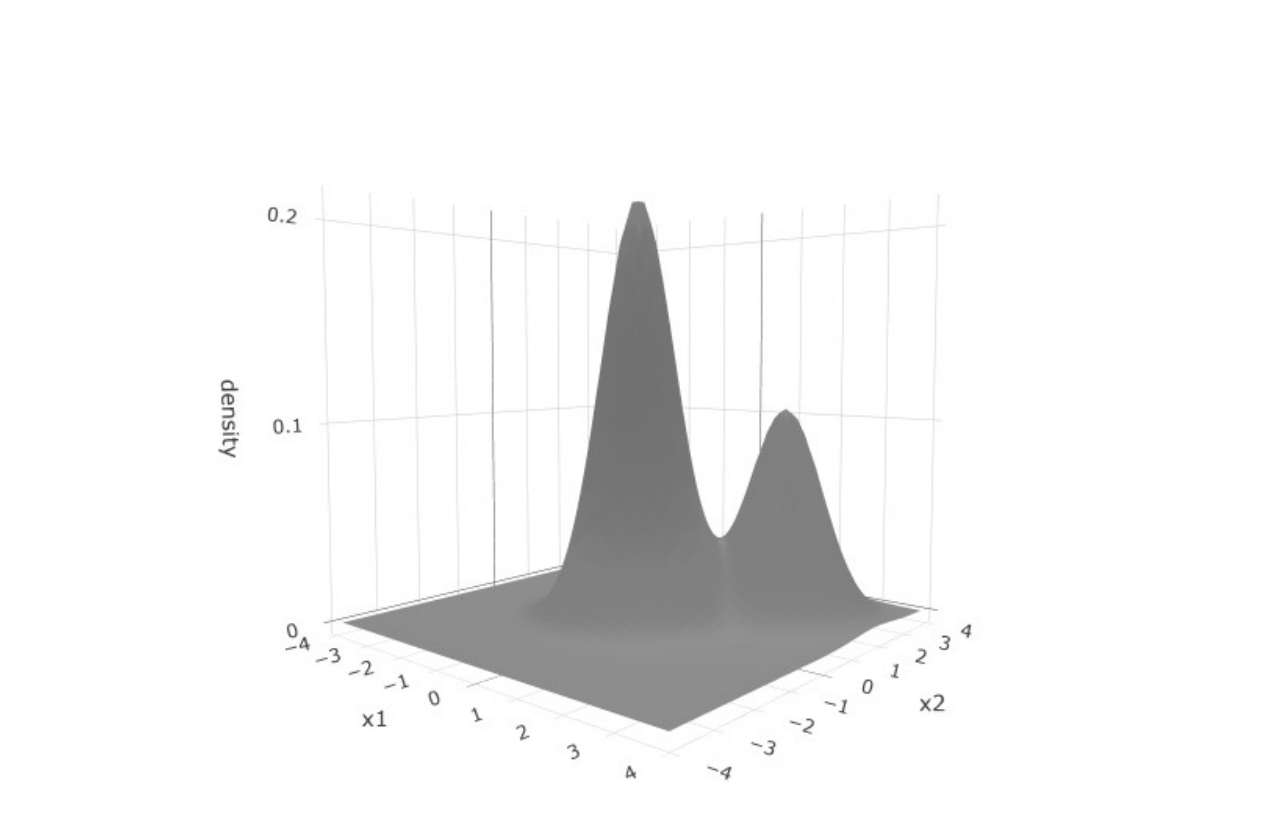} 
    \caption{$y=f(x_1, x_2)$}
    \label{fig:density function}
\end{minipage}
\end{figure}

\section{Conclusion}\label{Sec_conclusion}

This study introduced the pattern block method, a generalization of the Ziggurat method. 
Future work will focus on developing an algorithm 
that identifies the pattern blocks of a given density function with high adoption rate and low search cost.

\section*{Acknowledgments}

The authors thank 
Prof.\ Matsuura Yusuke (National Institute of Advanced Industrial Science and Technology), 
Prof.\ Masaaki Fukasawa (Osaka University) and 
Prof.\ Suguru Yamanaka (Aoyama Gakuin University) 
for their helpful comments and discussions on this study. 
We also thank Editage (www.editage.com) for English language editing. 
This work was supported by JSPS KAKENHI Grant Numbers
JP22K01556 and JP25K05168.

\end{document}